\newtheorem{thm}{Theorem}
\newtheorem{cor}{Corollary}
\newtheorem{prop}{Proposition}
\theoremstyle{definition}
\newtheorem{remark}{Remark}
\begin{document}

\title{A One-Line Proof of the Fundamental Theorem of Algebra with Newton's Method as a Consequence}
\author{Bahman Kalantari \\
Department of Computer Science, Rutgers University, Piscataway, NJ 08854\\
kalantari@cs.rutgers.edu}
\date{}
\maketitle

\begin{abstract}
Many proofs of the fundamental theorem of algebra rely on the fact that the minimum of the modulus of a complex polynomial over the complex plane is attained at some complex number. The proof then follows by arguing the minimum value is zero. This can be done by proving that at any complex number that is not a zero of the polynomial we can exhibit a direction of descent for the modulus.
In this note we present a very short and simple proof of the existence of such descent direction. In particular, our descent direction gives rise to Newton's method for solving a polynomial equation via modulus minimization and also makes the iterates definable at any critical point.
\end{abstract}


\section{Introduction}
We assume the reader has familiarity with a complex number $z=x+ iy$, $i=\sqrt{-1}$,  its conjugate $\overline z=x-iy$, its  modulus $|z|=\sqrt{z \overline z}=\sqrt{x^2+y^2}$, and Euler's formula, $e^{i \theta} =\cos \theta + i \sin \theta$.

To prove the fundamental theorem of algebra (FTA), that a nonconstant  complex polynomial $p(z)$ must have a zero, many proofs rely on the fact that the minimum of the modulus function
\begin{equation}
F(z)=|p(z)|^2=p(z) \overline{p(z)}
\end{equation}
over the complex plane is attained at a point $z_*$. This fact can be shown by first observing that  $|p(z)|$ approaches infinity as $|z|$ does. Thus the set $S = \{z : |p(z)| \leq|p(0)|\}$ is bounded. Then, by continuity of $p(z)$, $S$ is also
closed. Hence, the minimum of $F(z)$ over $S$ is attained and coincides with its minimum over the entire complex plane.  If $p(z_*)$ is nonzero, it suffices to exhibit a {\it direction of  descent} for $F(z)$ at $z_*$, i.e. a complex number $d$  such that for some positive real number $\alpha_*$ we have,
\begin{equation}
F(z_*+\alpha d) < F(z_*), \quad  \forall \alpha  \in (0, \alpha_*).
\end{equation}
This would then contradict optimality of $z_*$.
For proofs of the FTA based on a descent direction, see \cite{Fin}, \cite{BK2011}, \cite{Kon}, \cite{Lit}, \cite{Rio}. In fact, \cite{BK2011} gives a complete characterization of all descent and ascent directions.

Given a nonconstant complex polynomial $p(z)$, for any $z_0$  that is not a zero of $p(z)$, we explicitly define a direction that is a decent direction of the modulus function.  The use of this specific direction not only gives a proof of the FTA but justifies the definition of Newton's iteration for complex polynomials. It also allows defining the iterate when  $z_0$ is a critical point (i.e. $p'(z_0)=0$).

Newton's method for root finding  is traditionally studied for real polynomials and is the best known such method.  Given a real polynomial $p(x)$, and a seed $x_0$, Newton's iterations are defined as
\begin{equation} \label{eq3}
x_{j+1}=x_j- \frac{p(x_j)}{p'(x_j)}, \quad j=0, 1,...,
\end{equation}
where $p'(x)$ is the derivative of $p(x)$.  The iterate $x_{j+1}$ is undefined when $p'(x_j)=0$.  The iterate $x_{j+1}$ can be interpreted as the root of the tangent line to $p(x)$ at $x_j$.  Graphically, this property is taken as justification that when $x_0$ is close to a root of $p(x)$ the iterates converge to $\theta$. The latter can be proved analytically. In particular, $|p(x_j)|$ converges to zero. In general Newton iterates do not necessarily  decrease $|p(x)|$ monotonically, i.e. it is possible to have $|p(x_{j +1})| > |p(x_j)|$.

Cayley \cite{cayley79} is among the first to have considered Newton's iterations for a complex polynomial $p(z)$. Given $p(z)$ and a complex seed $z_0$, Newton's iterates are defined as in (\ref{eq3}), replacing $x_j$ with $z_j$. In this case too Newton's iteration $z_{j+1}$  is the solution to the linear equation, $p(z_j)+p'(z_j)(z-z_j)=0$. However, in the next section we will give another interpretation and derivation of the method in terms of modulus minimization.

\section{A Direction of Descent At Any Point, Critical or Not}

\begin{prop} \label{prop1} Let $u$ be a nonzero complex number. Given a natural number $k$, set
\begin{equation} \label{eq0}
G_k(z)=\overline u z^{k}+ u {\overline z}^k.
\end{equation}
Define the real numbers $\gamma$ and $\delta$ as
\begin{equation} \label{eq01}
\gamma=u^{(k-1)}+\overline{u}^{(k-1)}, \quad  \delta=i(u^{(k-1)}-\overline{u}^{(k-1)}).
\end{equation}
Then for any $\alpha >0$ and any real $\theta$,  we have
\begin{equation} \label{maineq}
G_k(\alpha e^{i \theta} u)=  \alpha^k |u|^2   (\gamma  \cos k\theta+   \delta \sin k \theta ).
\end{equation}
In particular,  given $\alpha >0$, we can select $\theta \in \{0, \pi/2k, \pi/k, 3\pi/2k\}$, so that  $G_k(\alpha e^{i \theta} u) <0$. Specifically,

\begin{equation} \label{eq5}
\text{Set} \quad \theta=
\begin{cases}
0, &\text{if $\gamma <0  $}; \\
\frac{\pi}{k},&\text{if $\gamma >0 $};\\
\frac{\pi}{2k},&\text{if $\delta <0 $};\\
\frac{3\pi}{2k},&\text{if $\delta >0$}.
\end{cases}
\quad \quad
\text{Then}, \quad G_k(\alpha e^{i \theta} u)=
\begin{cases}
~~\gamma \alpha^k |u|^2 <0; \\
-\gamma \alpha^k |u|^2 <0; \\
~~\delta \alpha^k |u|^2 <0; \\
-\delta \alpha^k |u|^2 <0.
\end{cases}
\end{equation}
\end{prop}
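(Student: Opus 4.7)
The plan is to prove the identity (\ref{maineq}) by direct substitution and then extract the case analysis from it.

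First I would note that $G_k(z) = \overline{u}z^k + \overline{\overline{u}z^k} = 2\operatorname{Re}(\overline{u}z^k)$, so $G_k$ is real-valued, which is reassuring and consistent with $\gamma, \delta$ being real. Then I would substitute $z = \alpha e^{i\theta}u$, using $z^k = \alpha^k e^{ik\theta}u^k$ and $\overline{z}^k = \alpha^k e^{-ik\theta}\overline{u}^k$, to get
\begin{equation*}
G_k(\alpha e^{i\theta}u) = \alpha^k\bigl(\overline{u}u^k e^{ik\theta} + u\overline{u}^k e^{-ik\theta}\bigr) = \alpha^k|u|^2\bigl(u^{k-1}e^{ik\theta} + \overline{u}^{k-1}e^{-ik\theta}\bigr),
\end{equation*}
after pulling $|u|^2 = u\overline{u}$ out of each term. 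Applying Euler's formula and collecting the $\cos k\theta$ and $\sin k\theta$ contributions gives $(u^{k-1}+\overline{u}^{k-1})\cos k\theta + i(u^{k-1}-\overline{u}^{k-1})\sin k\theta$, which is exactly $\gamma\cos k\theta + \delta\sin k\theta$ by (\ref{eq01}). This establishes (\ref{maineq}).

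For the second part, I would just evaluate the right-hand side of (\ref{maineq}) at the four specified values of $\theta$, noting that $\cos k\theta$ and $\sin k\theta$ take values in $\{0, \pm 1\}$ at $\theta \in \{0, \pi/2k, \pi/k, 3\pi/2k\}$; matching signs yields exactly the four expressions listed in (\ref{eq5}). The only subtlety is to check that the case analysis is non-vacuous, i.e.\ that some $\theta$ in the list actually gives $G_k<0$. For this I would observe that $\gamma = 2\operatorname{Re}(u^{k-1})$ and $\delta = -2\operatorname{Im}(u^{k-1})$, so $\gamma$ and $\delta$ vanish simultaneously only if $u^{k-1}=0$, which is impossible since $u\neq 0$. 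Hence at least one of $\gamma,\delta$ is nonzero, and the corresponding case in (\ref{eq5}) applies.

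I do not anticipate a genuine obstacle: the whole proposition is essentially a single computation followed by a sign bookkeeping step. The one place where care is needed is keeping track of which quantity is real and why—specifically, verifying that $\gamma$ and $\delta$ as defined in (\ref{eq01}) really are real numbers (they are, since $w+\overline{w}$ and $i(w-\overline{w})$ are always real for any complex $w$), so the expression $\gamma\cos k\theta + \delta\sin k\theta$ can be meaningfully signed.
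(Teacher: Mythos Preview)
Your proposal is correct and follows essentially the same approach as the paper's proof, which simply asserts that (\ref{maineq}) is ``easily verifiable from straightforward properties of conjugation, exponentiation and Euler's formula'' and that $\gamma,\delta$ cannot both vanish since $u\neq 0$. You have merely written out explicitly the computation the paper leaves to the reader, including the helpful observation $\gamma=2\operatorname{Re}(u^{k-1})$, $\delta=-2\operatorname{Im}(u^{k-1})$.
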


\begin{proof}  Equation (\ref{maineq}) is easily verifiable from straightforward properties of conjugation, exponentiation and Euler's formula.
Since $u \not =0$, it is easy to show $\gamma$ and $\delta$  cannot both  be zero.
Thus from (\ref{eq5}) we can choose $\theta$ so that for all $\alpha >0$,  $G_k(\alpha e^{i \theta}u)= c \alpha^k$, $c$ a negative constant.
\end{proof}

 Equation (\ref{maineq}) together with appropriate selection of $u$ and $\theta$ hold the essence of our proof of the FTA as they give rise to a descent direction for the modulus function.

\begin{thm} \label{thm1} Let $p(z)$ be a polynomial of degree $n \geq 1$. Let $F(z)=|p(z)|^2=p(z) \overline {p(z)}$. Suppose $p(z_0) \not =0$. Let $k \geq 1$ be the smallest index with $p^{(k)}(z_0) \not=0$. Let
\begin{equation} \label{udef}
u=\frac{1}{k!}p(z_0) \overline {p^{(k)}(z_0)}.
\end{equation}
Then for any $\alpha >0$ and any real $\theta$, we have
\begin{equation}\label{eq1}
F(z_0+ \alpha e^{i \theta} u)-F(z_0)=|p(z_0+ \alpha e^{i \theta} u)|^2- |p(z_0)| ^2= G_k(\alpha e^{i \theta} u) + q(\alpha),
\end{equation}
where $G_k(\alpha e^{i \theta} u)$ is as in (\ref{maineq}), and $q(\alpha)$ is a polynomial of degree $2n$ having
$\alpha^{l}$ as a factor, with $l > k$. In particular,  by selecting $\theta$ appropriately, $e^{i\theta} u$ is a descent direction for $F(z)$ at $z_0$.
\end{thm}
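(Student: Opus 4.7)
The plan is a direct Taylor expansion of $p$ about $z_0$ followed by multiplication with its conjugate, so as to isolate the leading-order contribution to $F(z_0+w) - F(z_0)$ and match it against $G_k$ through the specific choice of $u$.

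First I would write, using that $p^{(j)}(z_0)=0$ for $1 \le j < k$,
\begin{equation*}
p(z_0+w) = p(z_0) + \frac{p^{(k)}(z_0)}{k!}\, w^{k} + \sum_{j=k+1}^{n} \frac{p^{(j)}(z_0)}{j!}\, w^{j},
\end{equation*}
and take the conjugate as a polynomial in $\overline w$. Multiplying the two expansions and subtracting $|p(z_0)|^{2}$, the only contributions that become monomials of degree exactly $k$ in $\alpha$ after the substitution $w = \alpha e^{i\theta} u$ are those pairing the constant term of one factor with the $w^{k}$ or $\overline w^{k}$ term of the other. Using the definition (\ref{udef}), so that $\overline u = \overline{p(z_0)}\, p^{(k)}(z_0)/k!$, these cross terms collapse to
\begin{equation*}
\overline{p(z_0)}\,\frac{p^{(k)}(z_0)}{k!}\,w^{k} + p(z_0)\,\overline{\frac{p^{(k)}(z_0)}{k!}}\,\overline w^{k} = \overline u\, w^{k} + u\,\overline w^{k} = G_k(w).
\end{equation*}

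Next I would gather the remaining terms into $q(\alpha)$. Every such term has the form $c\, w^{a}\overline w^{b}$ where either $a\ge k+1,\ b=0$, or $a=0,\ b\ge k+1$, or $a,b\ge k$; in each case $a+b \ge k+1$ and $a+b \le 2n$. After substituting $w=\alpha e^{i\theta} u$ the term becomes a monomial in $\alpha$ of degree $a+b$, producing a polynomial $q(\alpha)$ of degree at most $2n$ divisible by $\alpha^{l}$ for some $l > k$. This establishes (\ref{eq1}). For the descent conclusion I would invoke Proposition \ref{prop1} to choose $\theta$ so that $G_k(\alpha e^{i\theta}u) = c\,\alpha^{k}$ with $c<0$; since $q(\alpha)/\alpha^{k}\to 0$ as $\alpha \to 0^{+}$, the right-hand side of (\ref{eq1}) is negative for every sufficiently small $\alpha>0$, so $e^{i\theta} u$ is indeed a descent direction for $F$ at $z_0$.

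The argument poses no real obstacle; the one step deserving care is the algebraic identification of the two cross terms with $G_k(w)$, which is precisely what dictates the definition (\ref{udef}) of $u$. Everything else reduces to bookkeeping of monomial degrees in a polynomial product, and the descent conclusion then falls out of Proposition \ref{prop1} at once.
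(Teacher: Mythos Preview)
Your argument is correct and follows essentially the same route as the paper: Taylor expand $p$ about $z_0$, multiply by its conjugate, identify the $\alpha^{k}$ coefficient with $G_k$ via the choice of $u$, collect the higher-order remainder into $q(\alpha)$, and then invoke Proposition~\ref{prop1} for the descent direction. The only cosmetic discrepancy is that you say $q(\alpha)$ has degree \emph{at most} $2n$, whereas the statement asserts degree exactly $2n$; this follows since the top term $|a_n|^{2}|u|^{2n}\alpha^{2n}$ is nonzero (as $a_n\neq 0$ and $u\neq 0$).
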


\begin{proof}  We  derive (\ref{eq1}) by setting $z=z_0+ \alpha e^{i \theta} u$ in the Taylor's expansion formula
\begin{equation}
p(z)=p(z_0)+ \sum_{j=k}^n \frac{p^{(j)}(z_0)}{j!}(z-z_0)^j,
\end{equation}
More specifically,  for $j=0, \dots, n$, denote $p^{(j)}(z_0)/{j!}$ by $a_j$. Thus from (\ref{udef}), $u=a_0 \overline {a_k}$.
 We have
 \begin{equation}
 F(z_0+ \alpha e^{i \theta} u)=  \bigg ( a_0+ \sum_{j=k}^n a_j \alpha^j e^{i j\theta } u^j \bigg ) \bigg ( \overline{a_0}+ \sum_{j=k}^n \overline{a_j}\alpha^j e^{-i j\theta } \overline{u}^j \bigg).
 \end{equation}
We can write the above as a real polynomial in $\alpha$ of the form $b_0+b_k \alpha^k+ b_{k+1} \alpha^{k+1}+ \dots + b_{2n} \alpha^{2n}$, where
 \begin{equation} \label{eqaa}
b_0= |a_0|^2, \quad  b_k \alpha^k= \alpha^k (\overline{u}u^k e^{ik \theta}  +  u \overline {u}^k e^{-ik \theta} ).
 \end{equation}

From (\ref{eq0}) and (\ref{eqaa}) it follows that $b_k \alpha^k$ coincides with  $G_k(\alpha e^{i\theta}u)$. In each of the remaining terms $b_j\alpha^j$, $j > k$.  Hence their sum gives a real polynomial $q(\alpha)$ of degree $2n$ claimed in (\ref{eq1}). From  Proposition \ref{prop1} we can choose $\theta$ so that for all $\alpha >0$,  $G_k(\alpha e^{i \theta}u) <0$. Since $q(\alpha)$ goes to zero faster than $\alpha^k$ does, for this $\theta$ there exists $\alpha_* >0$ so that for all $\alpha \in (0, \alpha_*)$ the right-hand-side of (\ref{eq1}) is negative.
\end{proof}

As a consequence of Theorem \ref{thm1} we have the following which justifies  Newton's method for complex polynomials.

\begin{cor} If $p'(z_0) \not =0$, $-p(z_0)/p'(z_0)$, is a descent direction for $F(z)$ at $z_0$.
\end{cor}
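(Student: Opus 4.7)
The plan is to apply Theorem \ref{thm1} in the special case $k=1$, and then observe that the Newton direction $-p(z_0)/p'(z_0)$ differs from the descent direction supplied by the theorem only by a positive real scalar, hence is itself a descent direction.

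First, since $p'(z_0) \neq 0$, the smallest index $k \geq 1$ with $p^{(k)}(z_0) \neq 0$ is $k=1$. By the formula (\ref{udef}), the corresponding $u$ is
\begin{equation*}
u = p(z_0)\,\overline{p'(z_0)}.
\end{equation*}
Plugging $k=1$ into (\ref{eq01}) gives $\gamma = u^{0} + \overline{u}^{0} = 2 > 0$ and $\delta = 0$, so the rule in (\ref{eq5}) prescribes $\theta = \pi/k = \pi$. Theorem \ref{thm1} then guarantees that $e^{i\pi} u = -u = -p(z_0)\,\overline{p'(z_0)}$ is a descent direction for $F(z)$ at $z_0$.

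Next, I would rewrite the Newton direction by multiplying numerator and denominator by $\overline{p'(z_0)}$:
\begin{equation*}
-\frac{p(z_0)}{p'(z_0)} = -\frac{p(z_0)\,\overline{p'(z_0)}}{|p'(z_0)|^2} = \frac{1}{|p'(z_0)|^2}\,(-u).
\end{equation*}
Thus $-p(z_0)/p'(z_0)$ equals the descent direction $-u$ scaled by the strictly positive real constant $c = 1/|p'(z_0)|^2$.

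Finally, I would note that the property of being a descent direction is preserved under multiplication by any positive real scalar: if $F(z_0 + \alpha(-u)) < F(z_0)$ for all $\alpha \in (0, \alpha_*)$, then taking $\alpha = c\beta$ yields $F(z_0 + \beta(-p(z_0)/p'(z_0))) < F(z_0)$ for all $\beta \in (0, \alpha_*/c)$. This concludes the argument. The proof contains no real obstacle; the only content is the algebraic identification of the Newton step with a positive rescaling of the direction $-u$ singled out by Theorem \ref{thm1} when $k=1$.
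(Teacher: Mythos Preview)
Your proof is correct and follows essentially the same route as the paper: set $k=1$, compute $u=p(z_0)\overline{p'(z_0)}$, observe $\gamma>0$ and $\delta=0$ so that $\theta=\pi$ and $-u$ is a descent direction, and then identify $-p(z_0)/p'(z_0)$ as a positive real multiple of $-u$. The paper phrases the last step by rewriting $u=|p'(z_0)|^2\,p(z_0)/p'(z_0)$ and leaves the scaling argument implicit, whereas you spell it out; also, your value $\gamma=2$ is the correct one (the paper's stated $\gamma=1$ is a slip), though of course only the sign matters.
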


\begin{proof} Since $k=1$, (\ref{udef}) gives $u=p(z_0) \overline {p'(z_0)}$. Equivalently, $u={|p'(z_0)|^2 p(z_0)}/{p'(z_0)}$. Then (\ref{eq0}) gives
 $\gamma=1$, $\delta=0$. Then from  Proposition \ref{prop1} and Theorem \ref{thm1}, $e^{i \pi}u=-u$ is a descent direction.
\end{proof}

\begin{remark}  When $u$ is a nonzero real number, (\ref{eq0}) implies $\delta=0$ so that there is only one descent direction. However, when $\lambda$ and $\delta$ are both nonzero two directions of descent are implies by (\ref{eq5}).
\end{remark}

Theorem \ref{thm1} also allows defining Newton's iteration at a critical points $z_0$. Using $u$ as defined in the theorem, the Newton iterate can be defined as
\begin{equation}
z_1=z_0+e^{i \theta}\frac{p(z_0)}{k!p^{(k)}(z_0)}=z_0+e^{i \theta}\frac{|p^{(k)}(z_0)|^2}{k!} \frac{p(z_0)}{ p^{(k)}(z_0)},
\end{equation}
where $\theta$ is selected as in Proposition 1, (\ref{eq5}), and $k$ is as in Theorem \ref{thm1}. As an example suppose $p(z)=z^2-c$, $c$ a positive constant. Then  $p'(0)=0$.  Thus from (\ref{udef}) $u={p(0) \overline {p''(0)}}/{2}=-c$. Then from (\ref{eq0}) $\gamma=-2c <0$, $\delta=0$.  Thus $\theta=0$, giving $z_1=c$.

\small{

}

\bigskip

\end{document}